\xpatchcmd{\@todo}{\setkeys{todonotes}{#1}}{\setkeys{todonotes}{inline,#1}}{}{}
\newtheorem{thm}{Theorem}[section]
\newtheorem{lem}[thm]{Lemma}
\newtheorem{prop}[thm]{Proposition}
\newtheorem{conj}[thm]{Conjecture}
\newtheorem{remark}{Remark}
\newtheorem{ass}{Assumption}
\newtheorem{ques}[thm]{Question}
\renewcommand{\le}{\leqslant}  
\renewcommand{\ge}{\geqslant}
\newcommand{\half}{\ensuremath{\sfrac12}} 
\newcommand{\eps}{\varepsilon}
\newcommand{\abs}[1]{\left\vert#1\right\vert}
\newcommand{\ie}{\emph{i.e.,}}
\newcommand{\eg}{\emph{e.g.,}}
\let\ga=\alpha   \let\gd=\delta 
     \let\gl=\lambda     \let\gn=\nu    \let\gs=\sigma \let\gt=\tau
\newcommand{\cI}{\mathcal{I}}
\newcommand{\cR}{\mathcal{R}}
\newcommand{\cV}{\mathcal{V}}
\newcommand{\vone}{\mathbf{1}}
\newcommand{\mvgn}{\boldsymbol{\nu}}
\newcommand{\dN}{\mathds{N}}
\newcommand{\dZ}{\mathds{Z}} 
\DeclareMathOperator{\E}{\mathds{E}}
\DeclareMathOperator{\pr}{\mathds{P}}
\newcommand{\wh}[1]{\widehat{#1}}
\newcommand{\ol}[1]{\overline{#1}}
\newcommand{\mix}{\mathrm{mix}}
\newcommand{\dist}{\mathrm{dist}}
\begin{document}
\title[Collaboration of Random Walks on Graphs]{Collaboration of Random Walks on Graphs}
\author[Dey]{Partha S.~Dey$^1$}
\author[Kim]{Daesung Kim$^2$}
\author[Terlov]{Grigory Terlov$^1$}

\address{$^1$University of Illinois Urbana--Champaign, 1409 W Green Street, Urbana, Illinois 61801}
\address{$^2$Georgia Institute of Technology, 686 Cherry Street, Atlanta, Georgia 30332}
\email{psdey@illinois.edu, dkim3009@gatech.edu, gterlov2@illinois.edu}

\subjclass[2020]{Primary: 	60G50, 60F99, 05C81}
\keywords{Random Walk, Random Graph, Range of Random Walk, Random Growth Model}

\begin{abstract}
Consider a collaborative dynamic of $k$ independent random walks on a finite connected graph $G$. We are interested in the size of the set of vertices visited by at least one walker and study how the number of walkers relates to the efficiency of covering the graph. To this end, we show that the expected size of the union of ranges of $k$ independent random walks with lifespans $t_1,t_2,\ldots,t_k$, respectively, is greater than or equal to that of a single random walk with the lifespan equal to $t_1+t_2+\cdots+t_k$. We analyze other related graph exploration schemes and end with many open questions.
\end{abstract}
\maketitle
\section{Introduction}
Across many disciplines, one often has to work with a complex network given only local information. While exact objectives may vary, they usually reduce to the problem of discovering the graph efficiently. Consider the following general setup. An agent is placed on a site in an unknown network, and at each given moment, they may travel to a neighboring site of their current position. Naturally, one is interested in the evolution of the set of visited vertices and how it approximates the properties of the underlying network. One can visualize this problem by imagining a person placed in an unknown city and tasked with drawing its map. What is the best strategy to proceed with? Would it help to place several people to do this job together? If one knows that this city looks like a square grid of size $m\times n$, then one can easily traverse it in $mn$ steps. However, it is not easy to devise a deterministic strategy for such a cartographer without knowing the specific properties of the network. Hence, naturally, one would like to implement a randomized strategy. In economics, it may be used to optimize marketing strategies on a social network, also known as seeding (see \eg~\cites{Akbarpour20,sadler22}). In mathematical biology, where the graph is based on the interaction of the species, random walks are used to explore the communal and hierarchical structures (see \eg~\cites{Rosvall08, Rosvall11}).

Similarly, in the study of gerrymandering, one is interested in understanding the set of all possible partitions (maps) of a given area into voting districts that satisfy certain conditions. Identifying the set of outliers is a difficult problem. While this set is highly complex, letting two maps be connected in a network if they differ only in a small location equips this set with a natural metric. This enables the exploration of the network with a random walk, leads to practical sampling, and gives further insight into various structural properties (see \eg~\cite{DeFord21} and references therein).

There have been considerable efforts to understand random sets arising from random walks on graphs. For instance, the fluctuation behavior of the set of vertices visited by random walks was studied in~\cites{DE51, JP70clt, JO68, LeGall} for $\dZ^d$, and~\cite{DK21} for the discrete torus $\dZ^d_n$. Sznitman~\cite{Sznitman10} introduced the random interlacement model,  constructed using a Poisson point process on the set of doubly infinite nearest--neighbor paths on $\dZ^d$. The percolative properties of the range of a simple random walk on $\dZ^d_n$ have been investigated in~\cites{Sznitman10, TW11}, combining the random interlacement model with coupling techniques. Another example is the competition of random walks on a graph (or the coloring of a graph with random walks), studied in~\cites{Gomes96, Dicker06, Miller13}. In a competitive dynamic, each walker is associated with the set of vertices it visited before others. Hence, several randomly growing sets compete for the area in this model.

This paper proposes a collaborative dynamic among several independent random walks on a graph. In contrast with the competition of random walks described above, a vertex is considered discovered if at least one of the random walks visited it. We investigate the average size of this set compared to that of a single random walk and its relation to the distribution of starting positions. In particular, Theorem~\ref{thm: k-collab} says that on average, $k$ independent identical walkers with lifespans $t_1,t_2,\ldots,t_k$ started from the stationary distribution independently of each other, on average discover a higher proportion of the graph than a single walker, also started at the stationary distribution, with the lifespan $t_1+t_2+\cdots+t_k$. This holds for any finite connected graph $G$ and any $k\in \dN$, very mild assumptions on $t_i$ (if any), and any time-homogeneous random walk with a stationary distribution on $G$. In Lemma~\ref{thm: k-collab vs star} we show that the former collaboration scheme also, on average, discovers a higher proportion of vertices than the same amount of independent random walks started at independently chosen vertices following the same distribution (``star" shape).

Our results lead to various natural questions, including a quantitative version of the inequality from Theorem~\ref{thm: k-collab}, stochastic dominance of one collaboration scheme over the other, and how the single walker scheme compares to the ``star" shaped one. We explicitly state and discuss these questions in Section~\ref{sec: discussion}. First, we review some basic results about reversible Markov chains.

\subsection{Markov chains on graphs and networks}
It is well-known that a reversible Markov chain on a finite state space can be seen as a random walk on an edge-weighted network. Let $G=(V,E)$ a finite undirected connected graph $G$. For each edge $e\in E$, we assign the weight $c(e)>0$, also known as \textit{conductance} of $e$. For $x,y\in V$ we write $x\sim y$ if $(x,y)\in E$. In this paper, we consider the Markov chain $(X(t))_{t\ge 0}$ on $G$ with transition matrix $P=(P(x,y))_{x,y\in V}$ given by
\begin{align*}
  P(x,y) = \begin{cases}
    \frac{c(x,y)}{c(x)}, & (x,y)\in E,\\
    0, & \text{otherwise},
  \end{cases}
\end{align*}
where $c(x)=\sum_{y:y\sim x}c(x,y)$.  One can interpret these weights as the inscription of the bias of the walker. For example, if one would like to prioritize discovering vertices with higher degrees, one can set $c(x,y)$ to equal the indicator that $(x,y)$ is an edge. For further information on random walks on networks, we refer the interested reader to~\cites{LPW, LP}. 

Note that $X(t)$ has the unique stationary distribution $\pi$ given by
\begin{align*}
  \pi(x) = \frac{c(x)}{\sum_{y\in V}c(y)} \quad\text{ for }x\in V.
\end{align*}

\subsection{Set up}\label{sec:setup}
Throughout the paper $k$ is a fixed positive integer and $(X_i(t))_{t\in \cI}$, for $i\in[k]:=\{1,2,\ldots,k\}$ and an appropriate index set $\cI$, are \textit{independent} Markov chains that satisfy the following assumption.
\begin{ass}\label{ass:mc}
For all $i\in [k]$ Markov chains  $(X_i(t))_{t\in \cI}$ have the \textit{same} transition matrix on a finite state space $V$ of size $\abs{V}=n$. Each $X_i$ is assigned a corresponding length of trajectory $t_i$, which we call a lifespan. We further assume that the Markov chain is \textit{time-homogeneous, reversible, irreducible, and aperiodic}. 
\end{ass}

Equivalently, each $X_i(t)$ can be seen as a random walk on the corresponding network $G=(V,E)$.  Let $\pi$ be the stationary distribution for the Markov chain. The range of the $i$-th random walk $X_i(t)$ at time $t$ is denoted by
\begin{align*}
	\cR_{i}(t):=\{X_{i}(s)\mid s\le t\} \quad \text{ for } i\in[k].
\end{align*}
If $k=1$, we simply denote by $X(t)=X_1(t)$ and $\cR(t)=\cR_1(t)$. 

Let $\nu_i$ be probability measures on $V$, for $i\in[k]$. We use the notations $\pr_{\nu_1,\nu_2,\ldots, \nu_k}$ and $\E_{\nu_1,\ldots, \nu_k}$ to denote the probability and the expectation of $(X_i)_{i\in[k]}$ where $(X_1,X_2,\ldots,X_k)$ starts from $\nu_1\otimes\cdots\otimes\nu_k$. If $\nu_i$'s are the same distribution $\nu$, we simply write $\pr_{\nu^k}$ and $\E_{\nu^k}$. If we sample the same starting points for all the $k$ random walks $(X_i)_{i\in[k]}$ from a single probability measure $\nu$, the probability, and the expectation are denoted by $\pr_\nu$ and $\E_\nu$. We use the notation $\pr_{x\sim \nu}$ and $\E_{x\sim\nu}$ to specify the starting point $x$ chosen from the distribution $\nu$. When a Markov Chains starts at a given point $x\in V$, {\ie} $\nu=\gd_x$, we 
denote the corresponding expectation and probability as $\E_x$ and $\pr_x$, respectively.

For some of our results, we further assume that $(X_i(t))_{t\in \cI}$ falls into one of the following three cases.
\begin{enumerate}[label=(\roman*)]
    \item for each $i\in [k]$, $(X_i(t))_{t\ge 0}$ are continuous-time Markov chains driven by an exponential clock with intensity $1$, \label{continuous version}
    \item for each $i\in [k]$, $(X_i(t))_{t\in \dN}$ are $\half$--lazy Markov chains, {\ie} at each step with probability $\half$ the particle does not move and otherwise proceeds following its transition matrix,\label{lazy version}
    \item for each $i\in [k]$, $(X_i(t))_{t\in \dN}$ are discrete time Markov chains and $t_1+t_2+\cdots+t_k$ is even. \label{even version}
\end{enumerate}
We believe that the last case is a restriction caused by our methods and should not play a significant role in the behavior of the system, see Conjecture~\ref{conj: oddcase}.

\subsection{Main results}\label{sec:main}

\begin{thm}[One vs.~Many - Uniform I.I.D.]\label{thm: k-collab}
    Let $k\ge 2$, $t_1,t_2,\ldots,t_k\in \cI$, and $(X_i(t))_{t\in\cI}$ be Markov chains on a finite connected graph $G$ satisfy Assumption~\ref{ass:mc} and falls into one of the cases~\ref{continuous version},~\ref{lazy version}, or~\ref{even version}. Then we have
\begin{align}\label{eq: main}
  \E_{\pi^k} \abs{\bigcup_{i=1}^k\cR_i(t_i)} \ge\E_{\pi} \abs{\cR\left(\sum_{i=1}^k t_i\right)} .
\end{align}
\end{thm}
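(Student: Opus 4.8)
\emph{Overview and reduction.} The plan is to reduce \eqref{eq: main} to a pointwise inequality at each vertex and then to establish that inequality from the spectral decomposition of the walk killed at the vertex. The conceptual core is the observation that $u\mapsto \pr_\pi(\tau_x>u)$ is a \emph{nonnegative} mixture of geometric (or, in continuous time, exponential) functions — equivalently, a completely monotone function — after which a correlation inequality does the rest. Concretely, put $T:=t_1+\cdots+t_k$, let $\tau_x$ be the hitting time of $x\in V$, and write $q_x(u):=\pr_\pi(\tau_x>u)$. By linearity of expectation (using independence of the $k$ walkers in the first identity),
\[
  \E_{\pi^k}\Bigl|\bigcup_{i=1}^k\cR_i(t_i)\Bigr|=n-\sum_{x\in V}\prod_{i=1}^k q_x(t_i),\qquad \E_\pi|\cR(T)|=n-\sum_{x\in V}q_x(T),
\]
so it suffices to prove the pointwise bound $\prod_{i=1}^k q_x(t_i)\le q_x(T)$ for each fixed $x$.

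\emph{Spectral representation of $q_x$.} Fix $x$ and let $P_x$ be the matrix obtained from $P$ by deleting the row and column indexed by $x$, i.e.\ the substochastic kernel of the walk killed on hitting $x$. Reversibility makes $P_x$ self-adjoint on $\ell^2(V\setminus\{x\},\pi)$; diagonalize it in a $\pi$-orthonormal eigenbasis $(\phi_j)$ with eigenvalues $\lambda_j$ (real, of modulus at most $1$ since $P_x$ is substochastic). With $\mathbf 1$ the indicator of $V\setminus\{x\}$, one has, for every lifespan $u\in\cI$,
\[
  q_x(u)=\langle\mathbf 1,P_x^{\,u}\mathbf 1\rangle_\pi=\sum_j a_j\rho_j^{\,u},\qquad a_j:=\langle\mathbf 1,\phi_j\rangle_\pi^2\ge 0,\quad \sum_j a_j=q_x(0)=1-\pi(x),
\]
where $\rho_j=\lambda_j$ in cases~\ref{lazy version} (so $\rho_j\in[0,1)$, since $P_x=\tfrac12 I+\tfrac12\widehat P_x$ with $\widehat P_x$ symmetric substochastic) and~\ref{even version} (so $\rho_j\in(-1,1)$), while in case~\ref{continuous version} one reads $P_x^{\,u}=e^{u(P_x-I)}$ and $\rho_j=e^{\lambda_j-1}\in(0,1)$. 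Let $\Lambda$ be a random variable with $\pr(\Lambda=\rho_j)=a_j/(1-\pi(x))$, so $q_x(u)=(1-\pi(x))\,\E[\Lambda^u]$; in particular $\E[\Lambda^u]\ge 0$ for all $u$ because $q_x(u)$ is a probability.

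\emph{Conclusion via a correlation inequality.} Let $\Lambda_1,\dots,\Lambda_k$ be independent copies of $\Lambda$. Since each $\E[\Lambda_i^{t_i}]\ge 0$ and $|\E[\Lambda_i^{t_i}]|\le\E[|\Lambda_i|^{t_i}]$,
\[
  \frac{\prod_{i=1}^k q_x(t_i)}{(1-\pi(x))^k}=\prod_{i=1}^k\E\bigl[\Lambda_i^{t_i}\bigr]\le\prod_{i=1}^k\E\bigl[|\Lambda|^{t_i}\bigr].
\]
On $[0,1]$, which contains $|\Lambda|$, each map $s\mapsto s^{t_i}$ is nonnegative and nondecreasing, so Chebyshev's sum (correlation) inequality applied $k-1$ times gives $\prod_i\E[|\Lambda|^{t_i}]\le\E[|\Lambda|^T]$. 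Now $|\Lambda|^T=\Lambda^T$: in cases~\ref{continuous version},~\ref{lazy version} because $\Lambda\ge 0$, and in case~\ref{even version} because $T$ is even. Hence $\E[|\Lambda|^T]=q_x(T)/(1-\pi(x))$, and rearranging gives $\prod_{i=1}^k q_x(t_i)\le(1-\pi(x))^{k-1}q_x(T)\le q_x(T)$, completing the proof.

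\emph{Where the difficulty sits.} The single indispensable ingredient is the nonnegativity $a_j=\langle\mathbf 1,\phi_j\rangle_\pi^2\ge 0$ in the spectral step: it is exactly the statement that $q_x$ is a mixture of geometric functions, and it is what makes the correlation inequality available; with it, no cancellation in $\sum_j a_j\rho_j^u$ can spoil the estimate. Everything else is bookkeeping — locating the spectrum of the killed kernel in $[0,1)$ in cases~\ref{continuous version},~\ref{lazy version} versus only $(-1,1)$ in case~\ref{even version}, and noting that the hypothesis ``$T$ even'' enters in precisely one spot, the passage from $|\Lambda|^T$ to $\Lambda^T$. Finally, the argument actually proves the pointwise inequality $\prod_i q_x(t_i)\le q_x(T)$, which is strictly stronger than \eqref{eq: main} and seems a natural entry point to a quantitative version.
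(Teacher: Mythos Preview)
Your proof is correct and follows essentially the same route as the paper: reduce to the pointwise bound $\prod_i q_x(t_i)\le q_x(T)$, write $q_x(u)$ via the spectral decomposition of the killed kernel as $(1-\pi(x))\,\E[\Lambda^u]$ with nonnegative weights, and finish with the Chebyshev/FKG correlation inequality for monotone functions of $\Lambda$ (or $|\Lambda|$). The only cosmetic difference is your treatment of case~\ref{even version}: where the paper pairs up the odd $t_i$'s and uses that odd powers are monotone on all of $\dR$, you pass directly to $|\Lambda|$ via $\E[\Lambda^{t_i}]\ge 0$ and then invoke ``$T$ even'' once at the end to identify $\E[|\Lambda|^T]$ with $\E[\Lambda^T]$ --- a slightly cleaner bookkeeping of the same idea.
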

The result shows that the expected size of the set of vertices covered by multiple random walks is always as large as that of a single random walk. Note that the inequality is not an asymptotic estimate and holds for any timespans $t_1,t_2,\ldots,t_k>0$. The proof is based on the spectral representation of the survival probabilities.

For some graphs, it might be challenging to compute the stationary measure of a given random walk; hence, one would want to relax the assumption on the distribution of the starting positions. 

For example, it is well-known that the distribution of the position after a time of order of the mixing time with an additional log factor is close to the stationary. It is natural to ask if one acquires starting positions in such a fashion, the statement of Theorem~\ref{thm: k-collab} would still hold. Propositions~\ref{lem: nonstat} and~\ref{lem:almostind} give affirmative answers. Define the following quantity 
\begin{equation}\label{eq:pistar}
    \pi_\ast := \min_{x\in V}\frac{\pi(x)}{1-\pi(x)}=\frac{\pi_{\min}}{1-\pi_{\min}}>0,
\end{equation}
where $\pi_{\min}:=\min_{x\in V}\pi(x)$.
Notice that, if $\pi$ is uniform, then $\pi_\ast = (|V|-1)^{-1}$ where $|V|$ is the number of vertices. Suppose $\pi$ is not uniform, then there exists a vertex $y$ such that $\pi(y)<1/|V|$. Since the map $t\mapsto t/(1-t)$ is increasing, we see that $\pi_\ast<(|V|-1)^{-1}$. Thus, we conclude that $\pi_\ast$ is maximized when $\pi$ is uniform, and the deficit of $\pi_\ast$ from its maximum value $(|V|-1)^{-1}$ measures how much $\pi$ is away from uniformity.

\begin{prop}[One vs.~Many - Near Uniform Independent]\label{lem: nonstat}
Under the same assumptions as in Theorem~\ref{thm: k-collab}. Suppose $\pi_\ast$ is as in~\eqref{eq:pistar} and $\nu_i$ be probability measures on $V$ for $i\in[k]$ such that 
\begin{align}\label{eq:nu-assumption1/3}
  \sup_{x\in V}\left|\frac{\nu_i(x)-\pi(x)}{\pi(x)}\right|\le (1+ \pi_\ast)^{1/k}-1\approx \pi_\ast/k. 
\end{align}
Then for every $t_j\in\cI$, $j=1,2,\ldots,k$, we have
\begin{align*}
  \E_{\nu_1, \nu_2, \ldots, \nu_k} \abs{\bigcup_{i=1}^k\cR_i(t_i)} 
  \ge\E_{\pi}\abs{\cR\left(t_1+t_2+\cdots+t_k\right)} .
\end{align*}
\end{prop}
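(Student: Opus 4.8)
The plan is to pass to single--site survival probabilities, pay a small multiplicative price to replace the starting laws $\nu_i$ by $\pi$, and then cancel that price against a geometric gain already present in the proof of Theorem~\ref{thm: k-collab}. Write $T:=t_1+\cdots+t_k$ and $n:=\abs V$. Summing the indicators $\ind[x\in\bigcup_i\cR_i(t_i)]$ over $x\in V$ and using that the walks are independent,
\begin{align*}
  \E_{\nu_1,\ldots,\nu_k}\abs{\bigcup_{i=1}^k\cR_i(t_i)}&=n-\sum_{x\in V}\prod_{i=1}^k\pr_{\nu_i}(x\notin\cR_i(t_i)),\\
  \E_\pi\abs{\cR(T)}&=n-\sum_{x\in V}\pr_\pi(x\notin\cR(T)),
\end{align*}
so it suffices to prove, for each fixed $x\in V$, the pointwise inequality $\prod_{i=1}^k\pr_{\nu_i}(x\notin\cR_i(t_i))\le\pr_\pi(x\notin\cR(T))$.

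For the replacement step, observe that if a walk starts at $x$ then $x\in\cR_i(t_i)$, so $\pr_x(x\notin\cR_i(t_i))=0$ and hence, for any probability measure $\mu$ on $V$, $\pr_\mu(x\notin\cR_i(t_i))=\sum_{y\ne x}\mu(y)\pr_y(x\notin\cR_i(t_i))$. By~\eqref{eq:nu-assumption1/3} we have $\nu_i(y)\le(1+\pi_\ast)^{1/k}\pi(y)$ for all $y$ (only this one--sided bound is used), and since the summands are nonnegative, $\pr_{\nu_i}(x\notin\cR_i(t_i))\le(1+\pi_\ast)^{1/k}\pr_\pi(x\notin\cR(t_i))$ for every $x$ and $i$. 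Multiplying over $i\in[k]$,
\begin{align*}
  \prod_{i=1}^k\pr_{\nu_i}(x\notin\cR_i(t_i))\le(1+\pi_\ast)\prod_{i=1}^k\pr_\pi(x\notin\cR(t_i)).
\end{align*}

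For the geometric gain, I would revisit the spectral argument behind Theorem~\ref{thm: k-collab}. For the reversible chain from $\pi$, the killed substochastic operator on $V\setminus\{x\}$ gives $f_x(t):=\pr_\pi(x\notin\cR(t))=\sum_j w_j(x)\,\rho_j(x)^t$ with $w_j(x)\ge0$ and $\sum_j w_j(x)=f_x(0)=1-\pi(x)<1$; in cases~\ref{continuous version} and~\ref{lazy version} all $\rho_j(x)\in[0,1)$ (reading $e^{-\theta_j(x)}$ in the continuous--time case), while in case~\ref{even version} one first passes to the majorant $g_x(t):=\sum_j w_j(x)|\rho_j(x)|^t\ge f_x(t)$, which agrees with $f_x$ at the even time $T$. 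A two--term rearrangement inequality applied to such a nonnegative mixture yields $f_x(s)f_x(t)\le(1-\pi(x))f_x(s+t)$, hence by iteration $\prod_{i=1}^k f_x(t_i)\le(1-\pi(x))^{k-1}f_x(T)$. Combining this with the previous display and with the elementary bound $1+\pi_\ast=1+\min_y\tfrac{\pi(y)}{1-\pi(y)}\le1+\tfrac{\pi(x)}{1-\pi(x)}=\tfrac{1}{1-\pi(x)}$,
\begin{align*}
  \prod_{i=1}^k\pr_{\nu_i}(x\notin\cR_i(t_i))&\le(1+\pi_\ast)(1-\pi(x))^{k-1}f_x(T)\\
  &\le(1-\pi(x))^{k-2}f_x(T)\le f_x(T),
\end{align*}
since $k\ge2$ and $1-\pi(x)\le1$. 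Summing over $x\in V$ gives the claim.

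The step I expect to be the genuine obstacle is the geometric gain: Theorem~\ref{thm: k-collab} only asserts the summed inequality $\sum_x\prod_i f_x(t_i)\le\sum_x f_x(T)$, whereas the argument above needs the sharper per--vertex bound $\prod_i f_x(t_i)\le(1-\pi(x))^{k-1}f_x(T)$, with one factor $(1-\pi(x))$ lost for each extra walk --- without it the $(1+\pi_\ast)$ picked up in the replacement step cannot be absorbed. Establishing this requires care with the mixture representation (nonnegativity of the weights, the sign of the $\rho_j(x)$, and the detour through $g_x$ in case~\ref{even version}); once it is in place, the threshold in~\eqref{eq:nu-assumption1/3} is exactly the one making $(1+\pi_\ast)(1-\pi(x))^{k-1}\le1$ for every $x$, so it is the best this method can give.
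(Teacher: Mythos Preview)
Your proof is correct and follows essentially the same route as the paper's: reduce to a per-vertex comparison of survival probabilities, replace each $\nu_i$ by $\pi$ at a cost of $(1+\pi_\ast)^{1/k}$ per factor, then absorb the total factor $(1+\pi_\ast)\le (1-\pi(x))^{-1}$ using the per-vertex inequality $\prod_i\pr_\pi(\gt(x)>t_i)\le(1-\pi(x))^{k-1}\pr_\pi(\gt(x)>T)$ coming from the spectral/mixture argument. Your worry in the last paragraph is unfounded: the proof of Theorem~\ref{thm: k-collab} in the paper does establish exactly this sharper pointwise bound (see~\eqref{prob-ineq-1} and its iteration), not merely the summed version, so the ``genuine obstacle'' you flag is in fact already handled.
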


\begin{remark}
Consider the $d$-dimensional discrete torus $\dZ^d_n$ of side-length $n$ for $d\ge 5$. It is well-known that if $t= \ga n^2\log n$, then 
\begin{align*}
    \max_{x,y\in \dZ^d}\left|\frac{\pr_x(X(t)=y)}{\pi(y)}-1\right|\le c e^{-c\ga}
\end{align*}
for some $c>0$, where $\pi$ is the stationary measure. Recall that for $\dZ^d_n$ the size of the vertex set $\abs{V}=n^d$. Since $|\cR(t)|/|V|\le t/n^d=\ga n^{2-d}\log n$, we see that if $n$ and $\ga$ are large enough, then $\pr_x(X(t)=y)$ is close to the stationary measure while the random walk rarely covers $G$ up to time $t$. 
\end{remark}

\begin{prop}[One vs.~Many - Near Uniform Dependent]\label{lem:almostind}
Under the same assumptions as in Theorem~\ref{thm: k-collab}. Suppose $\pi_\ast$ is as in~\eqref{eq:pistar}  and $\nu_i,i\in[k]$ are probability measures on $ V$ such that for some $\eps\in (0,1)$
\begin{align}\label{eq:nu-assumption1/4}
  \sup_{x\in V}\left|\frac{\nu_i(x)-\pi(x)}{\pi(x)}\right|\le (1+\eps \pi_\ast)^{1/k}-1, \text{ for all } i\in[k].
\end{align}
Assume that, there is a coupling $\mu$ of $\nu_i,i\in[k]$ such that
\begin{align}\label{eq:almost ind}
  \sup_{x_i \in G, i\in[k]}\frac{|\mu(x_1,x_2,\ldots,x_k)-\prod_{i=1}^k\nu_i(x_i)|}{\prod_{i=1}^k\pi(x_i)}\le (1-\eps)\cdot \pi_\ast.
\end{align}
Then for every $t_1, t_2,\ldots,t_k\in\cI$, we have 
\begin{align*}
  \E_{\mu} \abs{\bigcup_{i=1}^k \cR_i(t_i)} 
  \ge\E_{\pi} \abs{\cR\left(t_1+t_2+\cdots+t_k\right)}.
\end{align*}
\end{prop}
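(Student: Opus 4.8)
The plan is to follow the reduction behind Theorem~\ref{thm: k-collab} and then to treat the coupling $\mu$ as a perturbation of the product measure $\nu_1\otimes\cdots\otimes\nu_k$. Writing $n=\abs V$ and using $\abs A=\sum_{y\in V}\ind(y\in A)$ for $A\subseteq V$, one has $\E_{\mu}\abs{\bigcup_{i=1}^k\cR_i(t_i)}=n-\sum_{y\in V}\pr_{\mu}(y\notin\bigcup_{i=1}^k\cR_i(t_i))$ and likewise $\E_{\pi}\abs{\cR(\sum_i t_i)}=n-\sum_{y\in V}\pr_{\pi}(y\notin\cR(\sum_i t_i))$, so it suffices to prove, for every fixed $y\in V$,
\begin{align*}
  \pr_{\mu}\left(y\notin\bigcup_{i=1}^k\cR_i(t_i)\right)\le\pr_{\pi}\left(y\notin\cR\left(\sum_{i=1}^k t_i\right)\right).
\end{align*}
Since the $k$ walks evolve independently once their starting points are fixed, writing $f^y_t(x):=\pr_x(y\notin\cR(t))\ge 0$ we have $\pr_{\mu}(y\notin\bigcup_i\cR_i(t_i))=\sum_{x_1,\dots,x_k\in V}\mu(x_1,\dots,x_k)\prod_{i=1}^k f^y_{t_i}(x_i)$, while $\sum_{x\in V}\pi(x)f^y_t(x)=\pr_{\pi}(y\notin\cR(t))$.

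The one substantive input, already present in the proof of Theorem~\ref{thm: k-collab}, is a sharpened pointwise bound. The spectral representation $\pr_{\pi}(y\notin\cR(t))=(1-\pi(y))\,\E_{w_y}[\rho^t]$ (here $w_y$ is a probability measure on $[-1,1]$, supported on $[0,1)$ in cases~\ref{continuous version} and~\ref{lazy version}, built from the spectral decomposition of the chain killed at $y$), combined with the comonotonicity of $\rho\mapsto\rho^s$ and $\rho\mapsto\rho^t$ for nonnegative $\rho$ (in case~\ref{even version} one first passes to $|\rho|$), gives $\E_{w_y}[\rho^s]\,\E_{w_y}[\rho^t]\le\E_{w_y}[\rho^{s+t}]$ and hence, for every $y\in V$,
\begin{align}\label{eq:key-ptwise}
  \prod_{i=1}^k\pr_{\pi}\left(y\notin\cR(t_i)\right)\le(1-\pi(y))^{k-1}\,\pr_{\pi}\left(y\notin\cR\left(\sum_{i=1}^k t_i\right)\right).
\end{align}
Decomposing $\mu=\nu_1\otimes\cdots\otimes\nu_k+(\mu-\nu_1\otimes\cdots\otimes\nu_k)$ in the formula for $\pr_{\mu}(y\notin\bigcup_i\cR_i(t_i))$: by~\eqref{eq:nu-assumption1/4}, $\nu_i(x)\le(1+\eps\pi_\ast)^{1/k}\pi(x)$ for all $x$, so $\prod_i\nu_i(x_i)\le(1+\eps\pi_\ast)\prod_i\pi(x_i)$; by~\eqref{eq:almost ind}, $|\mu(x_1,\dots,x_k)-\prod_i\nu_i(x_i)|\le(1-\eps)\pi_\ast\prod_i\pi(x_i)$. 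Since $f^y_{t_i}\ge 0$, bounding the two pieces term by term and using $\sum_x\pi(x)f^y_t(x)=\pr_{\pi}(y\notin\cR(t))$ yields $\pr_{\mu}(y\notin\bigcup_i\cR_i(t_i))\le\big[(1+\eps\pi_\ast)+(1-\eps)\pi_\ast\big]\prod_{i=1}^k\pr_{\pi}(y\notin\cR(t_i))=(1+\pi_\ast)\prod_{i=1}^k\pr_{\pi}(y\notin\cR(t_i))$, and then~\eqref{eq:key-ptwise} upgrades this to
\begin{align*}
  \pr_{\mu}\left(y\notin\bigcup_{i=1}^k\cR_i(t_i)\right)\le(1+\pi_\ast)(1-\pi(y))^{k-1}\,\pr_{\pi}\left(y\notin\cR\left(\sum_{i=1}^k t_i\right)\right).
\end{align*}

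To conclude, note that by~\eqref{eq:pistar} $1+\pi_\ast=(1-\pi_{\min})^{-1}$, and since $\pi(y)\ge\pi_{\min}$ and $k\ge 2$,
\begin{align*}
  (1+\pi_\ast)(1-\pi(y))^{k-1}\le(1-\pi_{\min})^{-1}(1-\pi_{\min})^{k-1}=(1-\pi_{\min})^{k-2}\le 1
\end{align*}
using $0<1-\pi_{\min}<1$; this is exactly the desired pointwise comparison, so the proposition follows. The statement interpolates between Proposition~\ref{lem: nonstat} (let $\eps\uparrow 1$, so~\eqref{eq:almost ind} forces $\mu=\nu_1\otimes\cdots\otimes\nu_k$ and~\eqref{eq:nu-assumption1/4} reduces to~\eqref{eq:nu-assumption1/3}) and Theorem~\ref{thm: k-collab} (let $\eps\downarrow 0$ with $\nu_i\equiv\pi$, which already extends Theorem~\ref{thm: k-collab} to dependent starting configurations multiplicatively close to $\pi^{\otimes k}$ in the sense of~\eqref{eq:almost ind}). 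The main obstacle is~\eqref{eq:key-ptwise}: one has to make the spectral representation of the killed chain precise in each of the three regimes, and in case~\ref{even version} individual weights $\rho^{t_i}$ can be negative, so before invoking monotonicity one passes to $|\rho|$ via $\prod_i\E_{w_y}[\rho^{t_i}]\le\prod_i\E_{w_y}[|\rho|^{t_i}]\le\E_{w_y}[|\rho|^{\sum_i t_i}]=\E_{w_y}[\rho^{\sum_i t_i}]$, the last equality because $\sum_i t_i$ is even. Everything downstream of~\eqref{eq:key-ptwise} is the elementary bookkeeping above.
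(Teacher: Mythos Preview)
Your proof is correct and follows essentially the same route as the paper: decompose $\mu$ as $\nu_1\otimes\cdots\otimes\nu_k$ plus the discrepancy, bound the first piece via~\eqref{eq:nu-assumption1/4} and the second via~\eqref{eq:almost ind} to get the factor $(1+\pi_\ast)$ in front of $\prod_i\pr_\pi(\gt(y)>t_i)$, and then absorb that factor using the sharpened spectral bound~\eqref{eq:key-ptwise} together with $(1+\pi_\ast)(1-\pi(y))\le 1$. The only cosmetic difference is that the paper writes out $k=2$ and asserts the general case is analogous, whereas you carry general $k$ throughout and close with $(1+\pi_\ast)(1-\pi(y))^{k-1}\le(1-\pi_{\min})^{k-2}\le 1$, which is a cleaner way to finish.
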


We remark that the Assumptions~\eqref{eq:nu-assumption1/3},~\eqref{eq:nu-assumption1/4}, and~\eqref{eq:almost ind} are quite strong in the sense that $\pi_\ast\le (|V|-1)^{-1}$ and we are interested in the case where the size of the graph $|V|$ is large. However, such assumptions are required since the results in Propositions~\ref{lem: nonstat} and~\ref{lem:almostind} hold for \emph{any} number of random walks, \emph{any} underlying network, and very mild assumptions on $t_i$, if any (depending on the cases~\ref{continuous version},~\ref{lazy version}, and~\ref{even version}). This observation also suggests that not only stationary but also the uniformity of the starting distribution is crucial in this context. Hence one can also ask if the converse is true in some sense: if multiple random walks are more efficient in covering the graph than a single random walk for any number of random walks and for any lengths of time spans, then the starting distribution should be almost stationary and almost uniform in a practical sense. 

More generally this can be stated as the following question: \emph{how does the average size of the total area covered by $k$ walkers depend on the joint distribution of the initial positions?} See Question~\ref{q:opt}. While the generality of this question makes it challenging, it turned out that we can answer the question for particular schemes. In particular, we show that $k$ random walks started at points, chosen independently from the same distribution $\nu$, on average cover the graph more effectively than the $k$ walkers started at the same point, sampled from $\nu$.

\begin{lem}[Star vs. Many IID]\label{thm: k-collab vs star}
For any $k,t_1=t_2=\cdots=t_k\in \dN$, connected graph $G$, $(X_i(t))_{t\in\dN}$ a random walk on it, and a probability measure $\gn$ on $V$ we have 
\begin{align*}
  \E_{\gn^k} \abs{\bigcup_{i=1}^k\cR_i(t_i)} \ge\E_{\gn} \abs{\bigcup_{i=1}^k\cR_i(t_i)}.
\end{align*}
\end{lem}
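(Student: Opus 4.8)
The plan is to reduce both sides to a statement about a single walk's \emph{avoidance probability} of a fixed target vertex, and then to apply Jensen's inequality to the convex map $s\mapsto s^k$.

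First I would write the expected range size, under either starting law, as a sum over vertices of non-avoidance probabilities: decomposing $|A|=\sum_{v\in V}\ind_{v\in A}$ and using Fubini,
\[
  \E\,\Bigl|\bigcup_{i=1}^k\cR_i(t_i)\Bigr|
  =\sum_{v\in V}\Bigl(1-\pr\bigl(v\notin\cR_i(t_i)\text{ for all }i\in[k]\bigr)\Bigr).
\]
Write $t:=t_1=\dots=t_k$ for the common lifespan and, for each $v\in V$, set $g_v(x):=\pr_x\bigl(v\notin\cR(t)\bigr)\in[0,1]$, the probability that a single walk started at $x$ avoids $v$ up to time $t$. Since the $k$ walks are mutually independent given their starting points, I would compute the two relevant avoidance probabilities by conditioning on the starts: if all $k$ walks start from a common vertex $x$ (the ``star'' scheme) then the conditional avoidance probability is $g_v(x)^k$, so $\pr_\gn(v\notin\bigcup_i\cR_i(t))=\E_{x\sim\gn}\bigl[g_v(x)^k\bigr]$; if instead the $k$ starts are drawn independently from $\gn$, then the avoidance events of the walks are independent, giving $\pr_{\gn^k}(v\notin\bigcup_i\cR_i(t))=\prod_{i=1}^k\E_{x\sim\gn}[g_v(x)]=\bigl(\E_{x\sim\gn}g_v\bigr)^k$. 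It is exactly here that equality of all lifespans is used: it lets the joint avoidance probability factor through the \emph{single} function $g_v$.

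Combining these, and cancelling the $\sum_{v}1$ terms, the claimed inequality becomes
\[
  \sum_{v\in V}\bigl(\E_{x\sim\gn}g_v(x)\bigr)^k\;\le\;\sum_{v\in V}\E_{x\sim\gn}\bigl[g_v(x)^k\bigr],
\]
which holds term by term: for each fixed $v$, Jensen's inequality for the convex function $s\mapsto s^k$ on $[0,1]$ gives $\bigl(\E_\gn g_v\bigr)^k\le\E_\gn\bigl[g_v^k\bigr]$. Summing over $v$ finishes the proof.

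There is no genuinely hard step; the whole content is the realization that both schemes are governed by the same single-walk avoidance function $g_v$, together with the elementary fact that averaging the starting point of one walk before raising to the $k$-th power can only decrease the avoidance probability. The two points that need care are the conditioning on starting positions (one must use that, conditionally on the starts, the walks are independent of one another) and the restriction $t_1=\dots=t_k$: for unequal lifespans the common-start avoidance probability is $\E_{x\sim\gn}\bigl[\prod_{i=1}^k\pr_x(v\notin\cR(t_i))\bigr]$, and comparing this with $\prod_i\E_{x\sim\gn}[\pr_x(v\notin\cR(t_i))]$ would require a positive-association (FKG-type) argument rather than plain convexity. If one wishes, the equality case can also be read off: equality holds if and only if each $g_v$ is $\gn$-a.s.\ constant, so the inequality is strict whenever $k\ge2$ and $\gn$ gives positive mass to two vertices with distinct avoidance probabilities for some target.
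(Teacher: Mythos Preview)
Your proof is correct and is essentially the same argument as the paper's: both reduce to the termwise inequality $(\E_{x\sim\gn}\pr_x(\gt(y)>t))^k\le \E_{x\sim\gn}[\pr_x(\gt(y)>t)^k]$ via Jensen's inequality for $s\mapsto s^k$. Your write-up is more detailed in spelling out the conditioning and in noting why the equal-lifespan hypothesis is needed, but the core idea is identical.
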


We have shown that, under appropriate assumptions, multiple random walks starting with independent (almost) stationary distributions are better than a single random walk or starting at one vertex provided. A natural further step is to compare a single random walk and multiple random walks with the single starting position; see Question~\ref{q:star vs path}. The following theorem answers this question in the case of $d$-dimensional discrete torus $\dZ^d_n$, $k=3$, and such that at least one lifespan is long enough.

\begin{thm}\label{thm:starone}
Let $G=\dZ^d_n$, $d\ge 3$, and $t_1, t_2, t_3\in\dN$. 
Consider three independent simple random walks $X_1, X_2, X_3$ starting at $0$.

Then there exists a constant $C>0$ such that if $t_1\ge C n^2\log n$, then 
\begin{align*}
  \E_{0} \abs{\cR_1(t_1)\cup \cR_2(t_2)\cup \cR_3(t_3)}\ge \E_{0} \abs{\cR\left(t_1+t_2+t_3\right)}.
\end{align*}
\end{thm}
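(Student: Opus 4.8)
\emph{Plan.} The plan is to reduce to a pointwise (in the target vertex) inequality for survival probabilities, then use that $t_1$ exceeds the mixing scale to replace the long walk by its quasi‑stationary behaviour, and finally settle a genuinely second‑order estimate using transience of $\dZ^d_n$ for $d\ge 3$.

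First I would use complementary counting. Writing $\tau_v=\inf\{t:X(t)=v\}$ and using that $X_1,X_2,X_3$ are independent, $\E_0\bigl|\bigcup_{i=1}^3\cR_i(t_i)\bigr|=n-\sum_{v}\prod_{i=1}^3\pr_0(\tau_v>t_i)$, while $\E_0|\cR(T)|=n-\sum_v\pr_0(\tau_v>T)$ with $T=t_1+t_2+t_3$. Thus the theorem is equivalent to $\sum_v\pr_0(\tau_v>t_1)\pr_0(\tau_v>t_2)\pr_0(\tau_v>t_3)\le\sum_v\pr_0(\tau_v>T)$. This is the same starting point as the proof of Theorem~\ref{thm: k-collab}, the essential difference being that here the walks emanate from the fixed vertex $0$ rather than from $\pi$, so the clean super‑multiplicativity $\pr_\pi(\tau_v>a)\pr_\pi(\tau_v>b)\le\pr_\pi(\tau_v>a+b)$ used there fails pointwise and cannot be invoked directly.

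Next I would split the single long walk at time $t_1$ by the Markov property: $\pr_0(\tau_v>T)=\pr_0(\tau_v>t_1)\,\pr_{\eta_v}(\tau_v>t_2+t_3)$, where $\eta_v$ is the law of $X(t_1)$ conditioned on $\{\tau_v>t_1\}$. Since $t_1\ge Cn^2\log n$ and the relaxation time of SRW on $\dZ^d_n$ is $\Theta(n^2)$ — and, after deleting one vertex, a perturbation argument shows the killed chain $Q_v$ still has gap $\lambda_1(Q_v)-\lambda_2(Q_v)=\Theta(n^{-2})$ — the distribution $\eta_v$ lies within $n^{-A}$ ($\ell^\infty$ relative error, $A$ as large as we like by enlarging $C$) of the quasi‑stationary distribution $\rho_v$ of $Q_v$, uniformly in $v$ and in $t_2,t_3$. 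By vertex‑transitivity of $\dZ^d_n$ the matrices $Q_v$ are mutual translates, so the Perron eigenvalue $\lambda:=\lambda_1(Q_v)$ and the profile of $\rho_v$ are independent of $v$, and $\pr_{\rho_v}(\tau_v>s)=\lambda^{s}$ exactly. Hence $\pr_0(\tau_v>T)\ge(1-n^{-A})\,\pr_0(\tau_v>t_1)\,\lambda^{t_2+t_3}$, and it suffices to prove $\sum_v\pr_0(\tau_v>t_1)\bigl[\pr_0(\tau_v>t_2)\pr_0(\tau_v>t_3)-\lambda^{t_2+t_3}\bigr]\le n^{-A}\sum_v\pr_0(\tau_v>t_1)\lambda^{t_2+t_3}$. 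This is where $d\ge3$ is used: for $s\lesssim n^2$ the quantity $\pr_0(\tau_v>s)$ is controlled by the escape probability $1-\mathfrak G_d(v)/\mathfrak G_d(0)$ of SRW on $\dZ^d$ (via a local‑CLT coupling of $\dZ^d_n$ with $\dZ^d$), and for $s\gtrsim n^2$ it decays geometrically at rate $\lambda$, the two regimes matching at scale $n^2$; likewise $\pr_0(\tau_v>t_1)\approx a\,\psi(v)\,\lambda^{t_1}$ with $\psi$ the Perron eigenfunction of $Q_0$. Substituting, the target reduces (up to lower‑order terms) to the weighted inequality $\lambda^{t_2+t_3}\sum_v\psi(v)\ge\sum_v\psi(v)\,\pr_0(\tau_v>t_2)\pr_0(\tau_v>t_3)$, to be checked in the ranges $t_i\lesssim n^2$ and $t_i\gtrsim n^2$ using the decay of $\mathfrak G_d$ and the near‑constancy of $\psi$ away from $0$.

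The step I expect to be the main obstacle is precisely this last weighted comparison. Both sides of the original inequality agree to leading order — the union of the three ranges and the single range both behave, to first order, like a random interlacement at the common level $\asymp T/n^d$ — so one is forced into a sharp \emph{second‑order} estimate that quantifies the exact cost of anchoring the two short walks at the already‑visited vertex $0$ versus continuing the long walk from its (well‑mixed) endpoint; equivalently, decomposing the length‑$T$ walk into three consecutive pieces and coupling the first piece with $\cR_1(t_1)$, one must compare $|\cR_2(t_2)\cup\cR_3(t_3)|$ emanating from $0$ with two pieces emanating from near‑uniform points, all measured relative to $\cR_1(t_1)$. Making this rigorous needs hitting‑probability asymptotics on $\dZ^d_n$ that are uniform in $v$ and in the possibly short lifespans $t_2,t_3$, together with careful bookkeeping so that every error stays below the $n^{-A}$ budget provided by the $\log n$ factor in the hypothesis $t_1\ge Cn^2\log n$.
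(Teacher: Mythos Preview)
Your route diverges sharply from the paper's, and the divergence is the whole story. You mount a direct quasi-stationary comparison and, as you yourself say, land on a genuinely second-order estimate: both sides agree to leading order, and you are left with a weighted inequality in $\psi$, $\lambda$, and short-time survival probabilities that you do not prove. Worse, the reduction you write down already breaks: from $\pr_0(\tau_v>T)\ge(1-n^{-A})\,\pr_0(\tau_v>t_1)\,\lambda^{t_2+t_3}$ one needs $\sum_v\pr_0(\tau_v>t_1)\pr_0(\tau_v>t_2)\pr_0(\tau_v>t_3)\le(1-n^{-A})\sum_v\pr_0(\tau_v>t_1)\lambda^{t_2+t_3}$, not the weaker inequality with $+n^{-A}$ that you state; and the needed version fails outright at $t_2=t_3=0$, where the two sums are equal. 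So the scheme cannot close without precisely the sharp control you flag as missing.

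The paper avoids all of this with one idea you did not use: \emph{time reversal}. Reverse $X_1$ and concatenate it with $X_3$; since SRW on $\dZ_n^d$ has symmetric increments, $\cR_1(t_1)\cup\cR_3(t_3)$ is exactly the range of a single walk $Y_1$ of length $t_1+t_3$ started at $X_1(t_1)$, while $\cR_2(t_2)$ is the range of $Y_2$ started at $Y_1(t_1)$. By vertex-transitivity one may take the common start to be $\pi$-distributed, so $Y_1(0)\sim\pi$; and since $t_1\ge Cn^2\log n$ exceeds the mixing scale by a logarithmic factor, the pair $(Y_1(0),Y_2(0))$ satisfies~\eqref{eq:nu-assumption1/4} and~\eqref{eq:almost ind}, after which Proposition~\ref{lem:almostind} with $k=2$ and lifespans $(t_1+t_3,\,t_2)$ finishes. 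Merging two of the three arms into one path converts the star into a two-walk picture with nearly independent, nearly stationary starts --- a first-order reduction that never touches the hitting-probability asymptotics you were bracing for, and which also explains why the argument is specific to $k=3$.
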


The main ingredients of the proof are time reversal and the fact that the mixing time is of order $n^2$. One may extend the result to more general settings where time reversal and mixing properties are available. Moreover, the case when $k\ge 4$ remains open.

\section{Proof of Main Results}
For $A\subset V$ define $\tau_{i}(A)$ as the hitting time of the $i$-th random walk $X_i$ to $A$, \ie
\begin{align*}
	\tau_{i}(A):=\min\{t\ge 0\mid X_{i}(t)\in A\}.
\end{align*}
If $A=\{x\}$, we simply write $\gt_i(x)$. Let $\cV_i(t)$ be the set of vertices of $V$ not visited by $X_i$ until time $t$. We call $\cV_i(t)$ the vacant set at time $t$. Note that the range $\cR_i(t)=V\setminus \cV_i(t)$ and the size of $\cV_i(t)$ can be written as
\begin{align*}
  |\cV_i(t)| = \sum_{x\in V}\vone_{\{\gt_i(x)>t\}},
\end{align*}
which will be frequently used in what follows.
We first give the proof of Theorem~\ref{thm: k-collab}.

\begin{proof}[Proof of Theorem~\ref{thm: k-collab}]
We present the proof in case of the discrete Markov chain, {\ie} cases~\ref{lazy version} and~\ref{even version}. The continuous case is analogous.

Let $y\in V$ be fixed and $|V|=n$. Let $\wh{P}$ be the matrix obtained from $P$ by removing the row and the column corresponding to $y$, that is, $\wh{P}(\xi, \eta) = P(\xi, \eta)$ for $\xi, \eta \in V \setminus\{y\}$. By reversibility, the matrix $A(x,\xi)=\pi(x)^{\half}\pi(\xi)^{-\half}\wh{P}(x,\xi)$ is symmetric.
It follows from the Spectral theorem that there exist eigenvalues $\gl_v$ and orthonormal eigenvectors $\varphi_v$ for $v\in V\setminus\{y\}$ for $A$.
By the spectral representation, we have
\begin{align*}
  \pr_x(\gt(y)>t) 
  &= \pr_x(X_s\neq y, s=0,1,\ldots,t)\\
  &= \sum_{\xi\in V\setminus\{y\}} \wh{P}^t(x,\xi)\\
  &= \sum_{\xi\in V\setminus\{y\}}\pi(\xi)^{\half}\pi(x)^{-\half}A^t(x,\xi)
  = \sum_{v, \xi\in V\setminus\{y\}}\pi(\xi)^{\half}\pi(x)^{-\half}(\gl_v)^t \varphi_v(x)\ol{\varphi_v(\xi)}.
\end{align*}
Thus, 
\begin{align*}
  \pr_{\pi}(\gt(y)>t) 
  &= \sum_{x,v, \xi\in V\setminus\{y\}} \gl_v^t\cdot \varphi_v(x)\ol{\varphi_v(\xi)} \pi(\xi)^{\half}\pi(x)^{\half}
  = \sum_{v \in V\setminus\{y\}}\ga_v \cdot \gl_v^t,
\end{align*}
where
\begin{align*}
  \ga_v = \Big|\sum_{\xi\in V\setminus\{y\}}\varphi_v(\xi)\pi(\xi)^{\half}\Big|^2\ge 0.
\end{align*}
Note that if $t=0$, we have $\pr_{\pi}(\gt(y)>t)=1-\pi(y)=\sum_v \ga_v$. Define a random variable $W$ by
\begin{align*}
  \pr(W=\lambda_v) = \frac{\ga_v}{1-\pi(y)},\quad\text{ for } v\in G\setminus\{y\},
\end{align*}
then
\begin{align*}
  (1-\pi(y))^{-1}\pr_{\pi}(\gt(y)>t) =\E[W^t].
\end{align*}
For the next step we need to show that $\E[W^t]\E[W^s]\,\le\, \E[W^{t+s}]$.

In cases~\ref{continuous version} and~\ref{lazy version} the corresponding eigenvalues are nonnegative. By the eigenvalue interlacing theorem, we see that $\gl_v$ are nonnegative for all $v\in G\setminus\{y\}$, that is, $W$ only takes nonnegative values. Let $t,s\in\dN$ and $W'$ be an independent copy of $W$, then it follows from the monotonicity of the maps $x^t$, $x^s$ for $x\ge 0$ that 
\begin{align*}
    0\le \E[(W^t-(W')^t)(W^s-(W')^s)] = 2(\E[W^{t+s}]-\E[W^t]\E[W^s]).
\end{align*}

Thus, we have that
\begin{align}\label{ineq:moments}
  \frac{\pr_{\pi}(\gt(y)>t)\pr_{\pi}(\gt(y)>s) }{(1-\pi(y))^{2} }
  &= \E[W^t]\E[W^s]\,\le\, \E[W^{t+s}]
  = \frac{\pr_{\pi}(\gt(y)>t+s)}{1-\pi(y)}
\end{align}
and so
\begin{align}\label{prob-ineq-1}
  \pr_{\pi}(\gt(y)>t) \pr_{\pi}(\gt(y)>s) 
  \le (1-\pi(y))\pr_{\pi}(\gt(y)>t+s)
  \le \pr_{\pi}(\gt(y)>t+s).
\end{align}
By iteration, we get
\begin{align}
  \prod_{i=1}^k \pr_{\pi}(\gt(y)>t_i) 
  \le (1-\pi(y))^{k-1}\pr_{\pi}\left(\gt(y)>t_1+t_2+\cdots+t_k\right).
\end{align}

It remains to address the case~\ref{even version}. The odd values can be grouped in pairs if the sum $\sum_{i=1}^kt_i$ is even. For each pair of odd natural numbers $t$ and $s$ by monotonicity of the maps $x^t$, $x^s$ for $x\ge 0$, the inequality~\eqref{ineq:moments} holds. Hence replacing each pair of $\E[W^t]\E[W^s]$ by $\E[W^{t+s}]$ allows us to upper bound the product of such moments as another product of only even moments.
$$\prod_{i=1}^k \pr_{\pi}(\gt(y)>t_i) =(1-\pi(y))^{k-1}\prod_{i=1}^k \E[W^{t_i}]\le(1-\pi(y))^{k-1}\prod_{j=1}^{\ell} \E[W^{t_j}],$$
where all $t_j$'s are even. Thus we can replace $W$ by $\abs{W}$. The rest of the argument is the same as in cases~\ref{continuous version} and~\ref{lazy version}.
This completes the proof.
\end{proof}

\begin{proof}[Proof of Proposition~\ref{lem: nonstat}]
Since for each $i\in[k]$ we have that
\begin{align*}
  \left|\pr_{\nu_i}(\gt(y)>t)-\pr_{\pi}(\gt(y)>t)\right|
  &= \left|\sum_{x\in V}(\nu_i(x)-\pi(x))\pr_{x}(\gt(y)>t)\right|\\
  &\le \sup_{x\in V}\left|\frac{\nu_i(x)-\pi(x)}{\pi(x)}\right|\pr_{\pi}(\gt(y)>t).
\end{align*}
Assumption~\eqref{eq:nu-assumption1/3} implies that for $i\in[k],$ $$\pr_{\nu_i}(\gt(y)>t)\le (1+\pi_\ast)^{1/k}\cdot \pr_{\pi}(\gt(y)>t).$$ It then follows from~\eqref{prob-ineq-1} that for all $i$ and $j$
\begin{align}
\begin{split}\label{eq: gap}
  \prod_{i=1}^k\pr_{\nu_i}(\gt(y)>t_i) 
  &\le (1+ \pi_\ast)\cdot \prod_{i=1}^k\pr_{\pi}(\gt(y)>t_i)\\
  &\le \frac{1}{1-\pi(y)}\cdot \prod_{i=1}^k\pr_{\pi}(\gt(y)>t_i)
  \le \pr_{\pi}(\gt(y)>t_1+t_2+\cdots+t_k).
  \end{split}
\end{align}
By iteration, we complete the proof.
\end{proof}

\begin{proof}[Proof of Proposition~\ref{lem:almostind}]
We will prove the result for $k=2$; the general $k$ case follows similarly. For $x\in V$, Assumption~\eqref{eq:almost ind} and inequality~\eqref{eq: gap} yield that
\begin{align*}
  &\abs{\pr_{\mu}(\gt_1(x)>t_1, \gt_2(x)>t_2)-\pr_{\nu_1}(\gt_1(x)>t_1)\pr_{\nu_2}(\gt_2(x)>t_2)}\\
  &\qquad\le \sum_{y,z\in G}\pr_{y}(\gt_1(x)>t_1)\pr_{z}(\gt_2(x)>t_2) \cdot \abs{\mu(y,z)-\nu_1(y)\nu_2(z)}\\
  &\qquad\le (1-\eps)\cdot \pi_\ast\cdot \pr_{\pi}(\gt_1(x)>t_1)\pr_{\pi}(\gt_2(x)>t_2).
\end{align*}
By a similar argument to the one in the proof of Proposition~\ref{lem: nonstat} and Assumption~\eqref{eq:nu-assumption1/4}, we have that $$\pr_{\nu_i}(\gt(y)>t)\le \sqrt{1+ \eps\pi_\ast}\cdot \pr_{\pi}(\gt(y)>t).$$ 
Thus
\begin{align*}
  \pr_{\mu}(\gt_1(x)>t_1, \gt_2(x)>t_2)
  &\le \pr_{\nu_1}(\gt_1(x)>t_1)\pr_{\nu_2}(\gt_2(x)>t_2) \\
  &\qquad +\eps\pi_\ast\cdot \pr_{\pi}(\gt_1(x)>t_1)\pr_{\pi}(\gt_2(x)>t_2)\\
  &\le  (1+ \pi_\ast)\cdot  \pr_{\pi}(\gt_1(x)>t_1)\pr_{\pi}(\gt_2(x)>t_2)\le \pr_{\pi}(\gt(x)>t_1+t_2).
\end{align*}
This completes the proof.
\end{proof}

\begin{proof}[Proof of Lemma~\ref{thm: k-collab vs star}]
The case when $G$ is a general graph and $t=t_1=\cdots=t_k$ follows directly from Jansen's inequality. Indeed,

\begin{align*}
  \E_{x\sim\gn} \abs{\bigcup_{i=1}^k\cR_i(t)} 
  &= \abs{V}-\E_{x\sim \gn} \sum_{y\in V}\prod_{i=1}^k \vone_{\{\gt_i(y)>t\}} \\
  &= \abs{V}-\sum_{y\in V}\E_{x\sim \gn} (\pr_x(\gt(y)>t)^k\\
  &\le \abs{V}-\sum_{y\in V}(\pr_\gn(\gt(y)>t))^k
 = \E_{\gn^k} \abs{\bigcup_{i=1}^k\cR_i(t)} .
\end{align*}
%
\end{proof}

\begin{proof}[Proof of Theorem~\ref{thm:starone}]

Suppose $t_1\ge \ga T_{\mix}$ where $\ga>0$, that can depend on $n$, and $T_{\mix}$ is the mixing time. It is well-known that $T_{\mix}=\Theta(n^2)$. By the time reversal, it is equivalent to consider the ranges of two random walks $Y_1$ and $Y_2$ such that $Y_1$ starts from an ``almost'' stationary measure, $Y_1(0)=X_1(t_1)$, and $Y_2$ starts from $Y_2(0)=Y_1(t_1)$, see Figure~\ref{fig:timereversal}. If $t_1$ is large enough, it follows from the mixing property that the starting location $Y_2(0)$ is ``almost" stationary and is ``almost" independent from the random walk $Y_1(0)$. Indeed, since for some constant $c>0$ we have that $$\sup_{x\in V}|\pr_0(Y_1(t_1)=x)-\pi(x)|\le O(n^{-d}e^{-c\ga}),$$ it is easy to see that if $\ga=c'\log n$, for some $c'>0$, then the distributions of $Y_1(0)$ and $Y_2(0)$ satisfy the Assumptions~\eqref{eq:nu-assumption1/4} and~\eqref{eq:almost ind}. Thus, applying Proposition~\ref{lem:almostind} we conclude that three random walks starting at the same site on average cover the discrete torus more than a single random walk, as desired.\end{proof}
\tikzset{
		Big dot/.style={
			circle, inner sep=0pt, 
			minimum size=2.5mm, fill=black
		}
	}
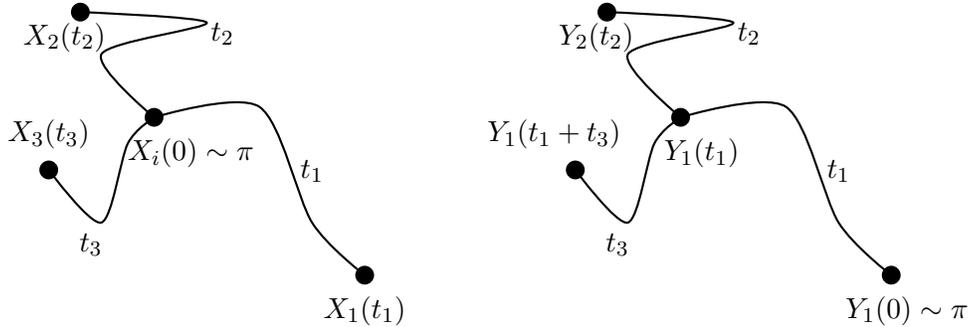
\begin{figure}[htb]
\begin{center}
	\begin{tikzpicture}[thick, scale=0.7]
	\node[Big dot] (1) at (-5,0) {};
	\node[Big dot] (2) at (5,0) {};
	   
	\node[Big dot] (3) at (-6.4,2) {};
	\node[Big dot] (V52) at (-1,-3) {};
	\node[Big dot] (V72) at (-7,-1) {};
	
	\draw  plot [smooth]  coordinates {(-5,0) (-6,1.2) (-4,1.8) (-6.4,2)};
	\draw  plot [smooth]  coordinates {(-5,0) (-3,0.2) (-2,-2) (-1,-3)};
	\draw  plot [smooth]  coordinates {(-5,0) (-5.5,-0.5) (-6,-2) (-7,-1)};
	
	\draw (-4.3,-0.2) node[below] {$X_i(0)\sim \pi$};
	\draw (-1,-3.2) node[below] {$X_1(t_1)$};
	\draw (-6.7,2) node[below] {$X_2(t_2)$};
	\draw (-7,-0.8) node[above] {$X_3(t_3)$};
	
	\node[Big dot] (r3) at (3.6,2) {};
	\node[Big dot] (V52) at (9,-3) {};
	\node[Big dot] (V72) at (3,-1) {};
	\draw  plot [smooth]  coordinates {(5,0) (4,1.2) (6,1.8) (3.6,2)};
	\draw  plot [smooth]  coordinates {(5,0) (7,0.2) (8,-2) (9,-3)};
	\draw  plot [smooth]  coordinates {(5,0) (4.5,-0.5) (4,-2) (3,-1)};
	
	\draw (5.4,-0.2) node[below] {$Y_1(t_1)$};
	\draw (9.3,-3.2) node[below] {$Y_1(0)\sim \pi$};
	\draw (3.4,2) node[below] {$Y_2(t_2)$};
	\draw (2.6,-0.8) node[above] {$Y_1(t_1+t_3)$};
	
	\draw (-2,-0.6) node[below] {$t_1$};
	\draw (8,-0.6) node[below] {$t_1$};
    \draw (-6.2,-2)node[below] {$t_3$};
    \draw (3.8,-2)node[below] {$t_3$};
	\draw (-3.7,2)node[below] {$t_2$};
	\draw (6.3,2)node[below] {$t_2$};
    
    \end{tikzpicture}
\end{center}
\caption{Depiction of time reversal procedure from the proof of Theorem~\ref{thm:starone}. On the left there is a sketch of trajectories of $3$ independent random walks $X_i$ with lifespans $t_1,t_2$ and $t_3$ started at the same point sampled from stationary distribution. On the right there are two trajectories of random walks $Y_1$ and $Y_2$. $Y_1$ is started from a stationary point and has lifespan $t_1+t_3$, while $Y_2$ is started from $Y_1(t_1)$ and has a lifespan of $t_2$.}
\label{fig:timereversal}
\end{figure}

\section{Open questions and discussion}\label{sec: discussion}

In this section, we discuss further questions and state conjectures, some of which are based on the simulations done by Tyler M. Gall in the case when $G$ is a two-dimensional torus and Andrew Yin in the case $G$ is a random graph.

As mentioned in Section~\ref{sec:main} we believe that version~\ref{even version}, that states that the total lifespan $T:=\sum_{i=1}^kt_i$ of random walks $(X_i(t))_{t\in[t_i]}$ has to be even, is unnecessary and the result should hold for any $t_1, t_2,\ldots,t_k$. However, this assumption is used only in the inequality~\eqref{ineq:moments}. While it does not seem to be a drastic difference to allow the total time to be odd, the inequality becomes difficult to justify. Hence we leave this part as a conjecture.
\begin{conj}\label{conj: oddcase}
The inequality~\eqref{eq: main} holds for any total life time of the discrete Markov chains.
\end{conj}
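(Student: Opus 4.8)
The plan is to reduce Conjecture~\ref{conj: oddcase} to an improved version of the key moment inequality~\eqref{ineq:moments}, namely to show that for the random variable $W$ defined via the spectral expansion of $\pr_\pi(\gt(y)>t)$ one has $\E[W^t]\E[W^s]\le \E[W^{t+s}]$ \emph{without} any parity hypothesis on $t$ or $s$. Once this holds for all $t,s\in\dN$, the iteration in the proof of Theorem~\ref{thm: k-collab} goes through verbatim and gives $\prod_{i=1}^k\pr_\pi(\gt(y)>t_i)\le(1-\pi(y))^{k-1}\pr_\pi(\gt(y)>\sum_i t_i)$ for arbitrary lifespans, whence summing over $y\in V$ yields~\eqref{eq: main}. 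So the entire problem is the scalar inequality for one fixed killed chain.

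The difficulty is precisely that, unlike cases~\ref{continuous version} and~\ref{lazy version}, the eigenvalues $\gl_v$ of the substochastic matrix $\wh P$ need not be nonnegative, so $W$ can take negative values and the slick ``$(W^t-(W')^t)(W^s-(W')^s)\ge0$'' argument fails for odd $t+s$; for instance $W^t$ is decreasing on the negative axis. One natural line of attack is to give up on treating a single killing vertex and instead prove the inequality~\eqref{prob-ineq-1} directly at the level of the walk, using a path-decomposition (first-return / last-exit) or a coupling argument: condition on the walk avoiding $y$ up to time $t$, and then restart; the point would be to show that the ``penalty'' for concatenating two avoidance events is at most $1-\pi(y)$, exploiting reversibility and stationarity rather than the sign of eigenvalues. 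A second line is analytic: write $\pr_\pi(\gt(y)>t)=\E[W^t]$ and try to show $t\mapsto \log\E[W^t]$ is superadditive on $\dN$ by establishing that the sequence $a_t:=\pr_\pi(\gt(y)>t)$ is log-concave-to-the-right in the relevant sense, or by controlling the negative part of the spectrum: one knows $\sum_v\ga_v\gl_v^t\ge0$ for all $t$, and one can try to use the $t=0$ normalization $\sum_v\ga_v=1-\pi(y)$ together with moment inequalities (Cauchy--Schwarz / Chebyshev's sum inequality applied to the measure $\ga_v/(1-\pi(y))$) to dominate the contribution of negative eigenvalues when $t$ or $s$ is odd.

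Concretely, the step I would attempt first is: show $\E[W^{t+s}]-\E[W^t]\E[W^s]\ge 0$ by splitting $W=W_+\vone_{W\ge0}+W_-\vone_{W<0}$ and bounding $\E[W^t]\E[W^s]\le \E[|W|^t]\E[|W|^s]\le \E[|W|^{t+s}]$, then arguing that $\E[|W|^{t+s}]\le \E[W^{t+s}]$ fails in general, so instead one must bound the odd-power cross terms. An alternative that may be cleaner: lazify after the fact. The chain $\half(I+P)$ has nonnegative spectrum and its killed version satisfies the inequality for all $t,s$; and there is an explicit relation $\pr^{\mathrm{lazy}}_\pi(\gt(y)>2m)$ versus the original chain via binomial smoothing $\pr^{\mathrm{lazy}}(\gt(y)>m)=\sum_j 2^{-m}\binom{m}{j}\pr(\gt(y)>j)$ — one could try to transfer a proven lazy-chain inequality back to the original chain when $t+s$ is odd, since the parity obstruction only appeared because $(1-\pi(y))^{k-1}$ had to absorb an extra factor. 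I expect the main obstacle to be exactly this transfer or, equivalently, controlling the negative eigenvalues: all three routes ultimately require showing that the ``non-bipartite-ness'' of an aperiodic reversible chain forbids the negative part of the spectrum from being so large that $\E[W^{\text{odd}}]$ drops below $\E[W^t]\E[W^s]$, and making that quantitative looks genuinely hard — which is why the authors left it as Conjecture~\ref{conj: oddcase}.
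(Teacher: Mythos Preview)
The statement you are attempting is labeled a \emph{conjecture} in the paper, and the paper contains no proof of it: the authors explicitly write that the parity restriction in case~\ref{even version} is ``a restriction caused by our methods'' and that they ``leave this part as a conjecture.'' So there is no paper proof to compare your proposal against.

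Your proposal is not a proof either, and you essentially acknowledge this. You correctly isolate the reduction: the whole problem comes down to showing $\E[W^t]\E[W^s]\le\E[W^{t+s}]$ for all $t,s\in\dN$ when $W$ is allowed to take negative values. You then sketch three lines of attack --- a path/coupling argument for~\eqref{prob-ineq-1}, a log-superadditivity argument for $a_t=\pr_\pi(\gt(y)>t)$, and a lazification-plus-transfer argument --- but none is carried through, and in each case you yourself flag the obstruction. In particular: the bound $\E[W^t]\E[W^s]\le\E[|W|^{t+s}]$ is useless because $\E[|W|^{t+s}]\ge\E[W^{t+s}]$ when $t+s$ is odd, exactly the wrong direction; the binomial smoothing relating the lazy chain to the original one does not invert cleanly enough to push the inequality back; and the coupling/first-return route would need a quantitative statement about how conditioning on avoidance distorts the law at time $t$, which is not supplied. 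So what you have written is an accurate diagnosis of why the odd case is open, not a proof of the conjecture.
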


It is natural to study the difference between the left-hand and right-hand sides of~\eqref{eq: main} as a function of the total life span $T$. When $T=0$, the difference is less or equal to $k-1$,
on the other hand, when $T$ is larger than the cover time of the graph $G$, the difference is $0$. 
\begin{figure}[htb]
  \centering
  \includegraphics[width=0.49\linewidth]{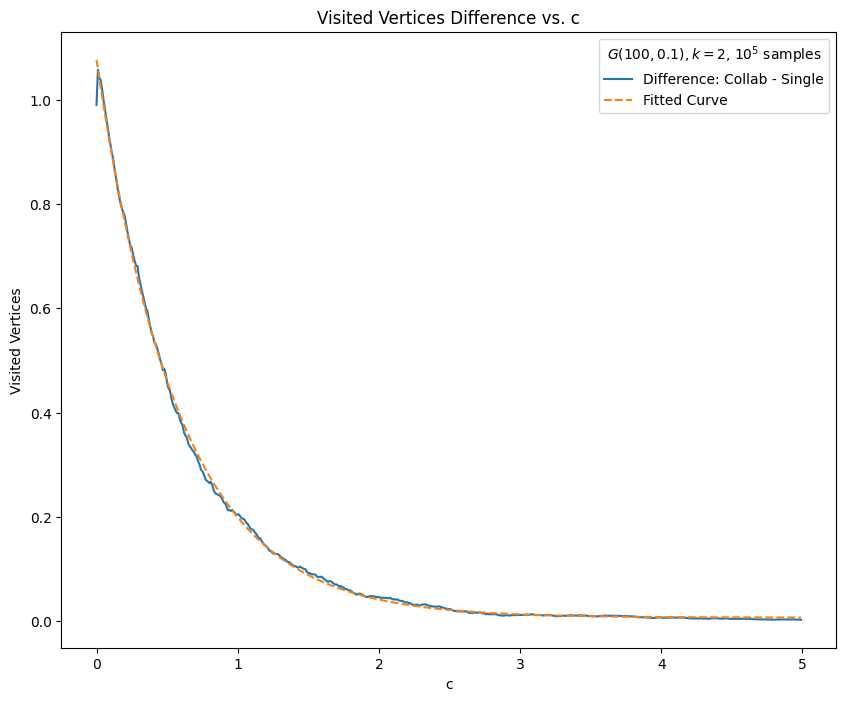}
  \includegraphics[width=0.5\linewidth]{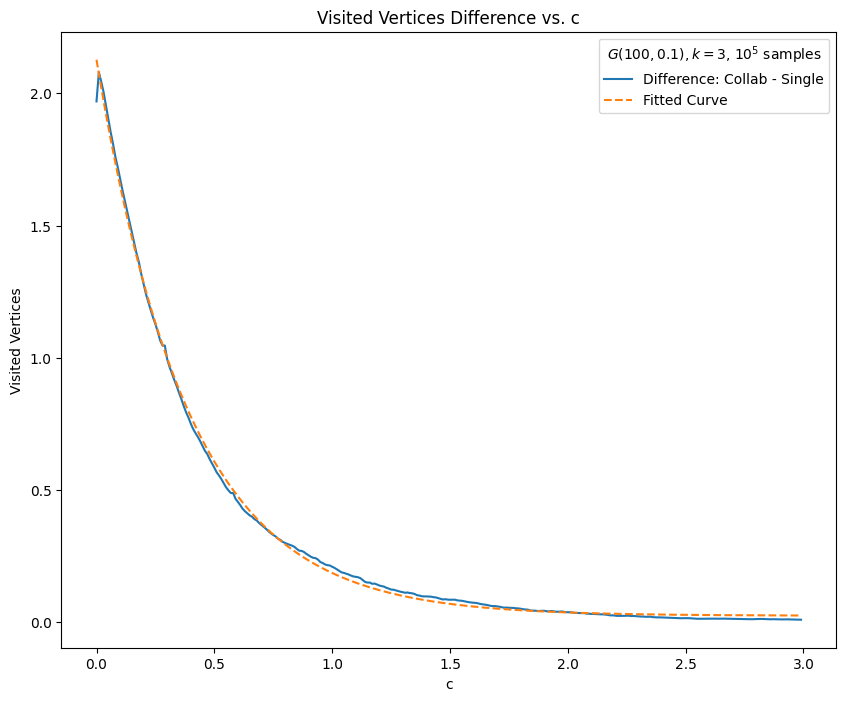}
  \caption{Simulation of the averaged difference of covered number of vertices (blue smooth curves) of $G_{n.p}$ with $n=100$, $p=0.1$ by $2$ random walks (left), $3$ random walks (right), of equal lengths and started at uniformly chosen vertex, and by a single random walk of length $T=k\cdot c\cdot n^2$ plotted versus $c$. Orange dashed curves are fitted exponential curves.}
  \label{fig:expdecay}
\end{figure}

Based on simulations, see Figure~\ref{fig:expdecay}, we ask the following question.
\begin{ques}
Under assumptions of Theorem~\ref{thm: k-collab} is it true that
\begin{align}\label{eq: exp}
  \E_{\pi^{k}} \abs{\bigcup_{i=1}^k\cR_i(t_i)} -\E_{\pi} \abs{\cR\left(\sum_{i=1}^kt_i\right)} \approx k\cdot \exp\left(-f(G)\cdot \sum_{i=1}^kt_i\right),
\end{align}
where $\pi$ is a stationary distribution, $f(G)$ is a function that depends on the graph $G$, and the number of walkers $k$.
\end{ques}

In addition to quantitative bound on the gap in the inequality~\eqref{eq: main} it is natural to study the fluctuation behaviors of the quantities on both sides of the inequality.
While the direct comparison between the variances of the two models is still open, we remark that in case of discrete torus $\dZ^d_n$, $d\ge 3$, partial answers are known.
The fluctuation behavior of the set of vertices covered by multiple random walks on the discrete torus $\dZ^d_n$, $d\ge 3$, as $n$ goes to $\infty$ was investigated in~\cite{DK21}. Indeed, if $t_1=t_2=\cdots=t_k=cn^d$, $c>0$, $d\ge 5$, and $\gs_{n,k}^2$ is the variance $\bigcup_{i=1}^k \cR_i(cn^d)$, then it was proven in~\cite{DK21} that $n^{-d}\gs_{n,k}^2$ converges to $\nu_d(2kc/G(0))$ where $\nu_d$ is an explicit function and $G(\cdot)$ is the Green's function on $\dZ^d$. Similar results hold for $d=3,4$. One can also extend it to general vertex-transitive graphs with some assumptions such as the hyper-cube $\dZ^n_2$ and the Cayley graphs of the symmetric groups, see~\cite{DK21}*{Remark 1.7}. The result in~\cite{DK21} implies that on the discrete torus $\dZ^d_n$, $d\ge 3$, the ranges of the two models in~\eqref{eq: main} has the same order of asymptotic variances if $t_1=\cdots=t_k=cn^d$ for some $c>0$.

We expect Lemma~\ref{thm: k-collab vs star} to hold more generally for arbitrary values of $t_i$ and a general class of graphs such as vertex transitive ones. For a transitive graph, it is clear that if for $x,y\in V$ we have that $\pr_x(\tau(y)\le t)$ is monotone with respect to $\dist_G(x,y)$ then the result follows from the FKG inequality. Such monotonicity does hold for simple graphs such as a cycle, however it is establish for a general graph. Hence we leave it as a question.

\begin{ques}\label{q:k-collab vs star}
Under what conditions on $k,t_1,t_2,\ldots,t_k\in \dN$, a graph $G$, and the probability measure $\nu$, we have that
\begin{align*}
  \E_{\gn^k} \abs{\bigcup_{i=1}^k\cR_i(t_i)} \ge\E_{\gn} \abs{\bigcup_{i=1}^k\cR_i(t_i)}?
\end{align*}
\end{ques}

Furthermore, it is of interest to understand how the distribution of the starting positions affects the performance compared to a single random walk with a combined lifespan. Theorem~\ref{thm: k-collab} and Proposition~\ref{lem: nonstat} state that if the starting distribution of each walker is close to stationary and is independent of each other, then such collaboration, on average, covers more of the graph than a single walk. On the other hand, if $k$ walkers start at a single vertex ({\ie} star shape), then for small values of $t_1,t_2,\ldots,t_k\in \dN$ they would interfere with each other, and hence a single walk with a lifespan $T=\sum_{i=1}^kt_i$ should visit more vertices. As values of $t_i$ increase and approach mixing time, the positions of the walkers become closer to independence and should start capturing graphs more and more efficiently. By that time, not much of the graph may be left unexplored. This suggests that one might want to consider the rate of acquiring new vertices and study the total time $T$, at which $k$ walkers start to explore faster than a single random walk.

\begin{ques}\label{q:star vs path}
Under what conditions on $k,t_1,t_2,\ldots,t_k\in \dN$ and for what measure $\mvgn$ on $V^k$, we have
\begin{align*}
  \E_{\mvgn} \abs{\bigcup_{i=1}^k\cR_i(t_i)} 
  \ge\E_{\pi} \abs{\cR\left(\sum_{i=1}^k t_i\right)}?
\end{align*}
\end{ques}

One can also look at the proposed problem from the point of view of the seeding problem, \ie~what initial conditions allow to cover the graph most efficiently. Sometimes in diffusion models on graphs, it is more beneficial to start at vertices with the highest degrees or at the furthest distance from each other in the graph metric. 
\begin{ques}\label{q:opt}
For a connected graph $G$, which measures $\nu_1\otimes\nu_2\otimes\cdots\otimes\nu_k$ starting at which $k$ random walks on average cover the graph most effectively?
\end{ques}

This article is concerned with inequalities between the average size of ranges of the corresponding strategy of capturing the graph. It is of great interest to extend this study to stochastic dominance under necessary assumptions if at all possible. 
\begin{ques}
    Let $t_1,t_2,\ldots, t_k\in\dN$ and $t=\sum_{i=1}^k t_i$. Is it true that for any $y>0$
    \begin{align*}\pr_{\pi^{k}}\left(\abs{\bigcup_{i=1}^k\cR_i(t_i)}\ge y\right) \ge\pr_{\pi}\left(|\cR(t)|\ge y\right)?
    \end{align*}
\end{ques}

Finally, it is likely that non-reversible Markov chains, such as non-backtracking random walk, would outperform random walks considered in this work. While, it seems intuitive that similar results ours should work, our analysis heavily relies on the reversibility. 
\begin{ques}
    Is it possible to extend Theorem~\ref{thm: k-collab} to non-reversible setting, \eg\ a non-backtracking random walk, various biased random walks, or self-avoiding random walks with re-sampling rule when the particle may not continue?
\end{ques}

\vskip.1in
\noindent{\bf Acknowledgments.} 
We thank our undergraduate students, Tyler M.~Gall and Andrew Yin, for helping with simulations and many insightful conversations. We also thank Illinois Geometry Lab at UIUC for providing a platform to undergraduate researchers.

\bibliography{ref-rw.bib}
\end{document}